\theoremstyle{plain}
  \newtheorem{theorem}{Theorem}
  \newtheorem{lemma}{Lemma}
\newcommand{\bsz}{{\boldsymbol{z}}}
\newcommand{\bsh}{{\boldsymbol{h}}}
\newcommand{\bsx}{{\boldsymbol{x}}}
\newcommand{\bsDelta}{{\boldsymbol{\Delta}}}
\newcommand{\bsgamma}{{\boldsymbol{\gamma}}}
\newcommand{\bstau}{{\boldsymbol{\tau}}}
\newcommand{\bszero}{{\boldsymbol{0}}}
\newcommand{\bbmE}{{\mathbbm{E}}}
\newcommand{\bbmP}{{\mathbbm{P}}}
\newcommand{\bbmone}{\mathbbm{1}}
\newcommand{\calP}{\mathcal{P}}
\newcommand{\calH}{\mathcal{H}}
\newcommand{\fraku}{\mathfrak{u}}
\newcommand{\frakv}{\mathfrak{v}}
\newcommand{\rme}{\mathrm{e}}
\newcommand{\rmi}{\mathrm{i}}
\newcommand{\N}{\mathbb{N}}
\newcommand{\Z}{\mathbb{Z}}
\newcommand{\rd}{\,\mathrm{d}}
\newcommand{\ceil}[1]{\left\lceil #1 \right\rceil}
\DeclareMathOperator{\supp}{supp}
\newcommand{\Pn}{\calP_n}
\DeclareRobustCommand{\stirling}{\genfrac\{\}{0pt}{}}
\newcommand{\labstirling}[2]{a(#1,#2)}
\newcommand{\st}{\: : \:}
\newcommand{\vast}{\bBigg@{4}}
\newcommand{\Vast}{\bBigg@{5}}
\newcommand{\norm}[1]{\|#1\|_{d,\alpha,\bsgamma}}
\newcommand{\ralpha}{r_{\alpha,\bsgamma}}
\newcommand{\omegafixed}{\omega_{n,\bsz}}
\newcommand{\eran}[1]{e_{#1,\alpha,\bsgamma}^{\text{\textup{ran}}}}
\newcommand{\erms}[1]{e_{#1,\alpha,\bsgamma}^{\text{\textup{rms}}}}
\newcommand{\spaceH}{\calH_{d,\alpha,\bsgamma}}
\newcommand{\muquant}{\mu_{d,\alpha,\bsgamma}(\lambda)}
\newcommand{\Zdnull}{\Z^d \setminus \{\bszero\}}
\newcommand{\Gset}[1]{G_\lambda^{(#1)}}
\newcommand{\Gnset}{G_{n,\lambda}}
\newcommand{\Qranpshift}{Q_{d,n,\bsz}^{\text{\textup{ran-pr-sh}}}}
\newcommand{\Qranp}{Q_{d,n,\bsz}^{\text{\textup{ran-pr}}}}
\begin{document}
\allowdisplaybreaks

\title{A randomised lattice rule algorithm \\
with pre-determined generating vector \\
and random number of points \\
for Korobov spaces with $0 < \alpha \le 1/2$}
\author{Dirk Nuyens and Laurence Wilkes \\[2mm] Department of Computer Science, KU Leuven}
\date{December 2023}
\maketitle

\begin{abstract}
In previous work \cite{KNW2023}, we showed that a lattice rule with a pre-determined generating vector but random number of points can achieve the near optimal convergence of $O(n^{-\alpha-1/2+\epsilon})$, $\epsilon > 0$, for the worst case expected error, commonly referred to as the randomised error, for numerical integration of high-dimensional functions in the Korobov space with smoothness $\alpha > 1/2$.
Compared to the optimal deterministic rate of $O(n^{-\alpha+\epsilon})$, $\epsilon > 0$, such a randomised algorithm is capable of an extra half in the rate of convergence.
In this paper, we show that a pre-determined generating vector also exists in the case of $0 < \alpha \le 1/2$.
Also here we obtain the near optimal convergence of $O(n^{-\alpha-1/2+\epsilon})$, $\epsilon > 0$; or in more detail, we obtain $O(\sqrt{r} \, n^{-\alpha-1/2+1/(2r)+\epsilon'})$ which holds for any choices of $\epsilon' > 0$ and $r \in \N$ with $r > 1/(2\alpha)$.
\end{abstract}

\section{Introduction}\label{section:Introduction}

We study the numerical approximation of the integral,
\begin{align}\label{eq:Id}
I_d(f)
&:=
\int_{[0,1]^d} f(\bsx) \rd \bsx
,
\end{align}
for multivariate functions which belong to a certain \emph{weighted Sobolev space}, with the norm expressed in terms of Fourier coefficients, called the \emph{Korobov space}, denoted by $\spaceH$. Specifically, we are interested in functions which have a low smoothness parameter $\alpha$, being $0 < \alpha \le 1/2$, where $\alpha$ controls the $\ell_2$ summability of the Fourier coefficients, see~\eqref{eq:H} later, but all statements in this paper hold for all $\alpha > 0$.
However, when $\alpha \le 1/2$ the function is not necessarily continuous on the torus $[0,1)^d$ anymore, therefore the Fourier series does not necessarily converge pointwise and we will employ the expected value over a \emph{random shift}.

To approximate~\eqref{eq:Id} we will use a \emph{shifted rank-$1$ lattice rule}, see, e.g., \cite{N1992,SJ1994,DKP2022}, with $n$ sample points, a generating vector $\bsz \in \Z^d$ and a shift $\bsDelta \in [0,1)^d$,
\begin{equation}\label{eq:LR}
    Q_{d,n,\bsz,\bsDelta}(f)
    :=
    \frac{1}{n} \sum_{k=0}^{n-1} f\!\left(\left\{\frac{\bsz k}{n} + \bsDelta\right\} \bmod 1\right)
    .
\end{equation}
A standard \emph{rank-$1$ lattice rule} is one which uses no shift,
\begin{equation}\label{eq:SLR}
    Q_{d,n,\bsz}(f)
    :=
    \frac{1}{n} \sum_{k=0}^{n-1} f\!\left(\frac{\bsz k \bmod n}{n}\right)
    ,
\end{equation}
or, equivalently, uses the zero shift,
\[
Q_{d,n,\bsz}(f) = Q_{d,n,\bsz,\bszero}(f)
.
\]
For fixed $n$, clearly the generating vector $\bsz \in \Z^d$ could be chosen really poorly, e.g., take $\bsz = (1, 1, \ldots, 1)$ and the zero shift, which will result in all the sample points being on the diagonal of the unit cube.
Therefore we are interested in finding a good generating vector for a given $n$.

In the \emph{deterministic case} (where necessarily $\alpha > 1/2$), when given a number of sample points $n$, one would seek to determine a generating vector $\bsz \in \Z^d$, which will minimise the \emph{worst case error}, which is the supremum of the error of the algorithm over all functions in the unit ball of the function space \cite{SJ1994,DKP2022}. In this case, the worst case error for a shifted lattice rule in the Korobov space with $\alpha > 1/2$ is independent of the choice of shift, and so~\eqref{eq:LR} and~\eqref{eq:SLR} both attain the same worst case error.

In the \emph{randomised setting}, \cite{KKNU2019,DGS2022,NW2008}, we allow algorithms to use randomness; e.g., for a lattice rule this could be in the way of selecting any or all of $n$, $\bsz$ and $\bsDelta$ randomly, so as to minimise the \emph{worst case expected error}, also known as the \emph{randomised error}, formally defined later in~\eqref{eq:eran}.
Algorithms for numerical integration can use randomness in multiple ways, e.g., one can consider randomly shifted lattice rules \cite{SKJ2002}; or randomly selected number of points and generating vectors \cite{KKNU2019,DGS2022,B1961}; or randomly selected number of points with a pre-determined generating vector \cite{KNW2023}, or take the median of rules with randomly selected generating vectors \cite{GlE2022}; or randomly scaled, and possibly randomly shifted, Frolov rules \cite{KN2017,U2017,F1976}; or Owen-scrambled digital nets \cite{O1995,O1997,D2007}; or just plain Monte Carlo sampling.
Not all of these randomised algorithms improve the order of convergence, and only very few of them obtain the optimal randomised error for the Korobov space that we consider in this paper, which is $O(n^{-\alpha-1/2})$, see \cite{U2017}.

The randomised algorithm we use in this paper is an extension of the one in \cite{KNW2023}, which, for given $n$, selects a random number of samples from a set of prime numbers within a certain range, for $n \ge 2$,
\[
\Pn
:=
\big\{n/2 < p \leq n : p \text{ prime}\big\}
,
\]
and then uses a pre-determined generating vector to create a rank-1 lattice rule for integrating a function. Despite its simplicity, this algorithm has been proven to have the near optimal worst case expected error $O(n^{-\alpha-1/2+\epsilon})$, $\epsilon > 0$, for functions in $\spaceH$, with $\alpha > 1/2$, for a carefully chosen $\bsz \in \Z^d$, see \cite{KNW2023}.
This algorithm is given as
\begin{align}\label{eq:Qran}
\Qranp(f,\omega)
&:=
Q_{d,p(\omega),\bsz}(f)
\qquad \text{with } p \sim \mathcal{U}(\Pn)
,
\end{align}
where the notation $p \sim \mathcal{U}(\Pn)$ means that the function $\omega \mapsto p(\omega)$ is a random variable distributed with the uniform distribution on $\Pn$.

When $0 < \alpha \le 1/2$ and we want to use a lattice rule we are required to employ a random shift.
The algorithm that we analyse in this paper is given by
\begin{align}\label{eq:Qran-shift}
\Qranpshift(f, \omega) := Q_{d,p(\omega_1),\bsz,\bsDelta(\omega_2)}(f)
\quad \text{with } p \sim \mathcal{U}(\Pn), \; \bsDelta \sim \mathcal{U}([0,1)^d)
,
\end{align}
where the notation means that $p(\omega_1)$ and $\bsDelta(\omega_2)$, as functions of the random event $\omega = (\omega_1,\omega_2)$, are independent random variables with the uniform distribution on $\Pn$ and $[0,1)^d$ respectively.
Compared to~\eqref{eq:Qran}, the only extra element is the necessity of a random shift which is taken i.i.d.\ uniformly on $[0,1)$ for every dimension.
The proof that this new algorithm, $\Qranpshift$, reaches the near optimal randomised error $O(n^{-\alpha-1/2+\epsilon})$, $\epsilon > 0$, for functions in $\spaceH$, with $\alpha > 0$, is non-trivial. Simply applying the same proof technique as in~\cite{KKNU2019}, which achieves the improved bound for a similar algorithm but which also averages over the generating vectors in the good set, would not give the improved bound for the new algorithm $\Qranpshift$ here; see~\eqref{eq:badbound}.

The paper is structured as follows.
In Section~\ref{sec:prelim} we will define the error measures and the function space that we will be working with. This leads us to define a set of generating vectors that are good in a certain sense.
In Section~\ref{section:existence_of_vector}, we will present an explicit expression for the \emph{worst case root mean square error} of $\Qranpshift$ and then use it to prove that there exists a pre-determined generating vector that allows to achieve the near optimal \emph{worst case expected error} convergence. We then provide a lower bound on the randomised and the root mean square error for both $\Qranp$ and $\Qranpshift$.

\section{Notation and Preliminaries}\label{sec:prelim}

We define $\N := \{1,2,3,\ldots\}$ and $\Z_n := \{0,1,\ldots,n-1\}$, and we will identify $\Z/n \Z$ with $\Z_n$.
For $\bsh \in \Z^d$ we define $\supp(\bsh):=\{j \in \{1,\ldots,d\} \st h_j \neq 0\}$.
We use $\equiv_p$ as a shorthand for the equivalence relation modulo~$p$.
The notation $\bbmone(\cdot)$ will be used for the indicator function and for $\bsh \in \Z^d$, by $\bbmone(\bsh \equiv_p \bszero)$ we mean that all components of $\bsh$ are multiples of $p$, i.e., $\bbmone(\bsh \equiv_p \bszero) \Leftrightarrow \prod_{j=1}^d \bbmone(h_j \equiv_p 0)$.

\subsection{Function Space and Error Quantities}\label{section:function_space_and_errors}

For $\alpha>0$ and $\bsgamma = \{\gamma_\fraku\}_{\fraku \subset \N}$ with $\gamma_\fraku > 0$, we define, for $d \in \N$,
\begin{equation}\label{eq:H}
    \spaceH
    :=
    \left\{ f \in L_2([0,1]^d) \st \norm{f} < \infty \right\}
    ,
\end{equation}
where
\[
\norm{f}^2 := \sum_{\bsh \in \Z^d} |\hat{f}(\bsh)|^2 \, \ralpha^2(\bsh)
,
\]
and for $\bsh \in \Z^d$ we define
\[
\ralpha(\bsh)
:=
\gamma_{\supp(\bsh)}^{-1} \prod_{j \in \supp(\bsh)} |h_j|^\alpha
\quad
\text{and}
\quad
\hat{f}(\bsh)
:=
\int_{[0,1]^d} f(\bsx) \, \rme^{-2 \pi \rmi \, \bsh \cdot \bsx} \, \mathrm{d}\bsx
.
\]
We consider an infinite sequence of weights, $\bsgamma = \{\gamma_\fraku\}_{\fraku \subset \N}$, to study tractability with respect to $d$, see~\eqref{eq:tractability_condition}.
It is important to note that, in the case where $\alpha > 1/2$, these functions are continuous on the torus $[0, 1)^d$ and can be expressed in terms of an absolutely converging Fourier series (and so we can speak about ``periodic'' functions where $f$ and its derivatives $f^{(\bstau)}$ for $0 \le \tau_j < \alpha - 1/2$ have absolutely converging Fourier series, see also \cite{NS2023}).

The weights $\bsgamma_\fraku$ are used to model the importance of the different subsets of dimensions in the function space, where larger weights denote larger importance.
Weighted spaces were first introduced in \cite{SW1998} and \cite{SW2001}, initially with \emph{product weights} $\gamma_\fraku = \prod_{j\in\fraku} \gamma_j$ for some positive sequence $\{\gamma_j\}_{j\ge1}$ so as to be able to state dimension independent error bounds under certain summability conditions, see \cite{NW2010}.
They will allow us to state our final error bound independent of the dimension.

For a randomised algorithm $A_{d,n}^\textrm{ran}$ we define the \emph{worst case expected error}, also referred to as the \emph{randomised error}, as
\begin{align}\label{eq:eran}
\eran{d}(A_{d,n}^\textrm{ran})
=
\sup_{\substack{f \in \spaceH \\ \norm{f} \leq 1}} \bbmE_\omega\big[|A_{d,n}^\textrm{ran}(f, \omega)-I_d(f)|\big]
.
\end{align}
Our proof will actually provide a bound on the \emph{worst case root mean square error},
\begin{align}\label{eq:erms}
\erms{d}(A_{d,n}^\textrm{ran})
=
\sup_{\substack{f \in \spaceH \\ \norm{f} \leq 1}} \sqrt{\bbmE_\omega\big[|A_{d,n}^\textrm{ran}(f,\omega)-I_d(f)|^2\big]}
,
\end{align}
which can be seen to satisfy
\begin{align*}%\label{eq:eran-erms-Jensen}
\eran{d}(A_{d,n}^\textrm{ran})
\leq
\erms{d}(A_{d,n}^\textrm{ran})
\end{align*}
due to Jensen's inequality.

\subsection{Good Sets}\label{section:good_sets}

For our existence proof we will limit the pre-determined generating vector to be equivalent to certain ``good generating vectors'' when considered modulo the primes in $\Pn$ separately.
For this we define the following sets of good generating vectors.
For $\lambda \in (0,\alpha)$ and prime $p$ define
\begin{align}\label{eq:G}
\Gset{p}
=
G_{d,\alpha,\bsgamma,\lambda}^{(p)}
&:=
\Bigg\{\bsz \in \Z_p^d \st \sum_{\substack{\bsh \in \Zdnull \\ \bsh \cdot \bsz \equiv_{p} 0}} r_{\alpha,\bsgamma}^{-1/\lambda}(\bsh) \leq \frac{4}{p} \muquant \Bigg\}
,
\end{align}
where
\begin{equation}\label{eq:mu_quantity}
    \muquant
    :=
    \sum_{\bsh \in \Zdnull}  \ralpha^{-1/\lambda}(\bsh)
    \qquad < \infty \quad \text{for } \lambda \in (0, \alpha)
    .
\end{equation}
Such ``good sets'', in different forms, were also used in \cite{KKNU2019,DGS2022,KNW2023}.
In \cite{DGS2022,KNW2023} the good sets were build w.r.t.\ the (deterministic) worst case error.
This is an efficiently computable criterion (in the case of a lattice rule with e.g.\ product weights), but it limits $\alpha > 1/2$.
In \cite{KKNU2019} the good set was formulated in terms of the Zaremba index which can be seen as the reciprocal of the $\ell_\infty$ worst case error, allowing for any $\alpha > 0$, but resulting in a hard to calculate criterion, see, e.g., \cite{N2014}.
With our definition~\eqref{eq:G}, we see that the value of $\alpha$ can be less than $1/2$, but we are required to fix a value of $\lambda \in (0,\alpha)$ such that~\eqref{eq:mu_quantity} is finite.
Furthermore, the set~\eqref{eq:G} depends on $\lambda$ in contrast to the good sets in \cite{KKNU2019,DGS2022,KNW2023}.

From the sets $\Gset{p}$, for $p \in \Pn$, we will now build a set $\Gnset$.
Let \[N := \prod_{p \in \Pn} p\] and define $\Gnset \subset \Z_N^d$ as the set of generating vectors which, modulo each of the primes in $\Pn$, are congruent to those in the good sets for each prime, i.e.,
\[
\Gnset
:=
\left\{\bsz \in \Z_N^d \st \big( \forall p \in \Pn \big) \:  \big( \exists ! \, \bsz^{(p)} \in \Gset{p} \big)  \: \text{with} \: \bsz \equiv_p \bsz^{(p)}\right\}
.
\]
By the Chinese remainder theorem, we have the following bijection
$$
\Gnset
\cong
\Gset{p_1} \oplus \cdots \oplus \Gset{p_L}
.
$$

We have the following two lemmas which pertain to the good sets. The first shows that these sets are sufficiently large.
\begin{lemma}\label{lemma:good_sets_are_large}
    For $\lambda \in (0,\alpha)$ and prime $p$, the set $\Gset{p}$ satisfies the condition
    \[
    \left|\Gset{p}\right|
    \geq
    \ceil{\frac{p^d}{2}}
    \geq 1
    .
    \]
\end{lemma}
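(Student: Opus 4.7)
The plan is a standard averaging-plus-Markov argument. Define for $\bsz \in \Z_p^d$
\[
S(\bsz) := \sum_{\substack{\bsh \in \Zdnull \\ \bsh \cdot \bsz \equiv_p 0}} \ralpha^{-1/\lambda}(\bsh),
\]
so that $\Gset{p} = \{\bsz \in \Z_p^d : S(\bsz) \le (4/p)\muquant\}$. I would first compute the average of $S$ over $\Z_p^d$ by swapping the two sums:
\[
\frac{1}{p^d} \sum_{\bsz \in \Z_p^d} S(\bsz)
=
\sum_{\bsh \in \Zdnull} \ralpha^{-1/\lambda}(\bsh) \cdot \frac{|\{\bsz \in \Z_p^d : \bsh \cdot \bsz \equiv_p 0\}|}{p^d}.
\]
Since $p$ is prime, the inner set has size $p^d$ when $\bsh \equiv_p \bszero$ and size $p^{d-1}$ otherwise (the kernel of a nonzero linear form over the field $\Z_p$). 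Splitting accordingly, the average equals $A + (\muquant - A)/p$, where $A := \sum_{\bsh \in \Zdnull,\; \bsh \equiv_p \bszero} \ralpha^{-1/\lambda}(\bsh)$.

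Next I would bound $A$ using the substitution $\bsh = p\bsk$ with $\bsk \in \Zdnull$. Since $\supp(p\bsk) = \supp(\bsk)$ and $|ph_j| = p|h_j|$,
\[
\ralpha(p\bsk) = p^{\alpha |\supp(\bsk)|} \ralpha(\bsk),
\qquad
\text{hence}
\qquad
\ralpha^{-1/\lambda}(p\bsk) \le p^{-\alpha/\lambda} \ralpha^{-1/\lambda}(\bsk),
\]
using $|\supp(\bsk)| \ge 1$. Because $\lambda \in (0,\alpha)$, we have $\alpha/\lambda > 1$ and therefore $p^{-\alpha/\lambda} \le 1/p$ for any prime $p\ge 2$. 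Summing gives $A \le \muquant / p$, so that the average of $S$ is bounded by $\muquant/p + (\muquant - A)/p \le 2\muquant/p$.

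Finally I would apply Markov's inequality: the number of $\bsz \in \Z_p^d$ for which $S(\bsz) > (4/p)\muquant$ is at most $p^d (2\muquant/p)/((4/p)\muquant) = p^d/2$, so at least $p^d/2$ vectors $\bsz$ satisfy $S(\bsz) \le (4/p)\muquant$. Since $|\Gset{p}|$ is a nonnegative integer, this forces $|\Gset{p}| \ge \ceil{p^d/2} \ge 1$. The only mildly subtle step is the bound on $A$ via the scaling of $\ralpha$, which is what forces the restriction $\lambda < \alpha$; everything else is bookkeeping.
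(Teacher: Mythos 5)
Your proof is correct and follows essentially the same route as the paper: compute the average of $S(\bsz)$ over $\Z_p^d$ by counting solutions of the linear congruence, bound the $\bsh \equiv_p \bszero$ contribution via the substitution $\bsh = p\bsk$ and the fact that $\alpha/\lambda > 1$, arrive at the bound $2\mu_{d,\alpha,\bsgamma}(\lambda)/p$, and finish with Markov plus integrality. The only cosmetic difference is that you write the average exactly as $A + (\mu - A)/p$ before bounding, whereas the paper bounds the $\tfrac{p-1}{p}A$ term directly, but this is the same computation.
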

\begin{proof}
    For $\bsh \in \Z^d$ and prime $p$ we will use
    \begin{equation} \label{eq:average_of_indicator}
        \frac{1}{p^d} \sum_{\bsz \in \Z_p^d} \bbmone(\bsh \cdot \bsz \equiv_p 0)
        =
        \frac{p-1}{p}\bbmone(\bsh \equiv_p \bszero)+\frac{1}{p}
        ,
    \end{equation}
    which comes from the fact that, if there is an $h_j \not\equiv_p 0$, then, fixing all other components of $\bsz$, there is exactly one choice of $z_j$ which results in~$\bsh \cdot \bsz \equiv_p 0$ (since $p$ is prime).
    Now take the average
    \begin{align*}
        &\frac{1}{p^d}\sum_{\bsz \in \Z_p^d} \sum_{\substack{\bsh \in \Zdnull \\ \bsh \cdot \bsz \equiv_{p} 0}} \ralpha^{-1/\lambda}(\bsh) \\
        &\hspace{5mm}= \sum_{\bsh \in \Zdnull} \Bigg(\frac{1}{p^d}\sum_{\bsz \in \Z_p^d} \bbmone(\bsh \cdot \bsz \equiv_{p} 0)\Bigg) \, \ralpha^{-1/\lambda}(\bsh)
        \\
        &\hspace{5mm}= \frac{p-1}{p} \sum_{\bsh \in \Zdnull} \bbmone(\bsh \equiv_{p} \bszero) \, \ralpha^{-1/\lambda}(\bsh) +\frac{1}{p}\sum_{\bsh \in \Zdnull} \ralpha^{-1/\lambda}(\bsh)
        \\
        &\hspace{5mm}\leq \sum_{\bsh \in \Zdnull}   \left(p^{|\supp(\bsh)|}\right)^{-\alpha/\lambda}\ralpha^{-1/\lambda}(\bsh) +\frac{1}{p} \, \muquant
        \\
        &\hspace{5mm}\leq \frac{2}{p} \, \muquant
        .
    \end{align*}
    For the first inequality, $\bsh$ is substituted with $p \bsh$ in the first sum, as all components of $\bsh$ are divisible by~$p$. This allows us to extract the factor of $p$ from $\ralpha^{-1/\lambda}(p\bsh)$.

    By Markov's inequality and the above average over all $\bsz \in \Z_p^d$, we can see that, for $\bsz$ chosen uniformly from~$\Z_p^d$,
    \[
    \bbmP_{\bsz \sim \mathcal{U}(\Z_p^d)}\Bigg(\sum_{\substack{\bsh \in \Zdnull \\ \bsh \cdot \bsz \equiv_{p} 0}} \ralpha^{-1/\lambda}(\bsh) \leq \frac{4}{p} \, \muquant\Bigg)
    \geq
    \frac{1}{2}
    .
    \]
    This implies that, there are at least $\ceil{p^d/2}$ such vectors which satisfy
    \[
    \sum_{\substack{\bsh \in \Zdnull \\ \bsh \cdot \bsz \equiv_{p} 0}} \ralpha^{-1/\lambda}(\bsh)
    \leq
    \frac{4}{p} \, \muquant
    ,
    \]
    which proves the lemma.
\end{proof}

The next lemma allows us to see that certain $\ralpha(\bsh)$ terms will be bounded below.

\begin{lemma}\label{lemma:lower_bound_on_r}
    For $\lambda \in (0,\alpha)$, prime $p \in \Pn$ and $\bsz \in \Gset{p}$, if $\bsh \in \Z_d \setminus \{\bszero\}$ satisfies $\bsh \cdot \bsz \equiv_p 0$, we have
    \begin{align}\label{eq:rbound}
    \ralpha(\bsh)
    >
    B_{n,\lambda}
    ,
	\end{align}
    where
    \begin{align}\label{eq:B_n}
        B_{n,\lambda}
        =
        B_{d,\alpha,\bsgamma,n,\lambda}
        :=
        n^{\lambda} \left[8 \, \muquant \right]^{-\lambda}
        .
    \end{align}
\end{lemma}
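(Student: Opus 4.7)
The plan is essentially a one-step unwrapping of the definition of $\Gset{p}$ combined with the lower bound $p > n/2$ coming from $p \in \Pn$.

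First I would observe that, since $\bsh \in \Z^d \setminus \{\bszero\}$ with $\bsh \cdot \bsz \equiv_p 0$, the quantity $r_{\alpha,\bsgamma}^{-1/\lambda}(\bsh)$ is one of the (nonnegative) terms appearing in the sum that defines $\Gset{p}$. Hence the membership $\bsz \in \Gset{p}$ yields the single-term bound
\[
r_{\alpha,\bsgamma}^{-1/\lambda}(\bsh)
\;\leq\;
\sum_{\substack{\bsh' \in \Zdnull \\ \bsh' \cdot \bsz \equiv_p 0}} r_{\alpha,\bsgamma}^{-1/\lambda}(\bsh')
\;\leq\;
\frac{4}{p}\,\muquant.
\]
Raising to the power $-\lambda$ (which reverses the inequality since $-\lambda < 0$) immediately gives
\[
r_{\alpha,\bsgamma}(\bsh)
\;\geq\;
\left(\frac{p}{4\,\muquant}\right)^{\lambda}.
\]

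Next I would plug in the defining bound on $p$. Since $p \in \Pn$, we have the \emph{strict} inequality $p > n/2$, so $p/(4\muquant) > n/(8\muquant)$. Raising the positive quantity to the power $\lambda > 0$ preserves the strict inequality and yields
\[
r_{\alpha,\bsgamma}(\bsh)
\;>\;
\left(\frac{n}{8\,\muquant}\right)^{\lambda}
\;=\;
n^{\lambda}\big[8\,\muquant\big]^{-\lambda}
\;=\;
B_{n,\lambda},
\]
which is the claim. There is no serious obstacle here: the entire argument is a direct combination of the defining inequality of the good set with the lower half of the interval $(n/2,n]$ defining $\Pn$, where the strictness of the conclusion in~\eqref{eq:rbound} is inherited from the strictness of $p > n/2$.
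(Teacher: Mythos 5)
Your proposal is correct and takes essentially the same approach as the paper: both extract the single term $r_{\alpha,\bsgamma}^{-1/\lambda}(\bsh)$ from the nonnegative sum defining $\Gset{p}$ and combine the resulting bound with $p > n/2$; the only difference is the order in which you apply the prime bound and the power $-\lambda$, which does not change the argument.
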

\begin{proof}
    By the definition of $\Gset{p}$, and using $p > n/2$ for $p \in \Pn$, we have
    \[
    \sum_{\substack{\bsh \in \Zdnull \\ \bsh \cdot \bsz \equiv_p 0}} \ralpha^{-1/\lambda}(\bsh)
    \leq
    \frac{4}{p} \, \muquant
    <
    \frac{8}{n} \, \muquant
    .
    \]
    This also implies that whenever $\bsh \in \Z_d \setminus \{\bszero\}$ satisfies $\bsh \cdot \bsz \equiv_p 0$ for any $p\in \Pn$,
    \begin{equation*}
    \ralpha(\bsh)
    >
    n^{\lambda} \, (8 \, \muquant)^{-\lambda}
    ,
    \end{equation*}
    thus proving the lemma.
\end{proof}

\section{Existence of a Vector with Near Optimal Error}
\label{section:existence_of_vector}

In this section we show existence of a vector which will produce the near optimal randomised error for $\eran{d}(\Qranpshift)$ by proving a bound on $\erms{d}(\Qranpshift)$, cf.~\eqref{eq:eran} and~\eqref{eq:erms}. We will start with a lemma which reduces the \emph{worst case root mean square error}, see~\eqref{eq:erms}, to a more manageable form.

\begin{lemma}\label{lemma:rms_alt_form}
    For $\alpha > 0$ we have
    \begin{equation}\label{eq:ermsranpshift}
        \erms{d}(\Qranpshift) = \sup_{\bsh \in \Zdnull} \frac{\sqrt{\omegafixed(\bsh)}}{\ralpha(\bsh)}
    \end{equation}
    where
    \[
    \omegafixed(\bsh) := \frac{1}{|\Pn|} \sum_{p \in \Pn} \bbmone(\bsh \cdot \bsz \equiv_p 0)
    .
    \]
\end{lemma}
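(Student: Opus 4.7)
The plan is to expand the integration error of a shifted rank-$1$ lattice rule as a Fourier series and then successively average over the shift $\bsDelta$ and the random prime $p$. Specifically, for a fixed prime $p$ and shift $\bsDelta$, a character sum calculation gives
\[
Q_{d,p,\bsz,\bsDelta}(f) - I_d(f) = \sum_{\substack{\bsh \in \Zdnull \\ \bsh \cdot \bsz \equiv_p 0}} \hat{f}(\bsh) \, \rme^{2\pi \rmi \, \bsh \cdot \bsDelta},
\]
using that $\tfrac{1}{p} \sum_{k=0}^{p-1} \rme^{2\pi \rmi \, k \bsh \cdot \bsz/p} = \bbmone(\bsh \cdot \bsz \equiv_p 0)$ and that $\hat f(\bszero) = I_d(f)$.

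Next I would square this error and integrate over $\bsDelta \sim \mathcal{U}([0,1)^d)$. Orthogonality of the exponentials $\{\rme^{2\pi \rmi \, \bsh \cdot \bsDelta}\}_{\bsh \in \Z^d}$ on $[0,1)^d$ kills every cross term and gives
\[
\bbmE_{\bsDelta}\bigl[|Q_{d,p,\bsz,\bsDelta}(f) - I_d(f)|^2\bigr]
=
\sum_{\substack{\bsh \in \Zdnull \\ \bsh \cdot \bsz \equiv_p 0}} |\hat f(\bsh)|^2.
\]
Averaging over $p \sim \mathcal{U}(\Pn)$, independent of $\bsDelta$, and swapping the two finite/absolutely convergent sums yields
\[
\bbmE_{\omega}\bigl[|\Qranpshiftinst(f) - I_d(f)|^2\bigr]
=
\sum_{\bsh \in \Zdnull} |\hat f(\bsh)|^2 \, \omegafixed(\bsh).
\]

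Finally, I would take the supremum over $f \in \spaceH$ with $\norm{f} \le 1$. Writing $a_{\bsh} := |\hat f(\bsh)|^2 \, \ralpha^2(\bsh) \ge 0$, the constraint becomes $\sum_{\bsh \in \Z^d} a_{\bsh} \le 1$, and the quantity to maximise becomes $\sum_{\bsh \in \Zdnull} a_{\bsh} \, \omegafixed(\bsh)/\ralpha^2(\bsh)$. This is a linear functional on the simplex, so its supremum equals $\sup_{\bsh \in \Zdnull} \omegafixed(\bsh)/\ralpha^2(\bsh)$; the upper bound is immediate, and the lower bound is attained (arbitrarily closely) by the single Fourier mode $f(\bsx) = \ralpha(\bsh_0)^{-1} \rme^{2\pi \rmi \, \bsh_0 \cdot \bsx}$, which lies in the unit ball of $\spaceH$ and has $\bbmE_\omega[|Q-I|^2] = \omegafixed(\bsh_0)/\ralpha^2(\bsh_0)$. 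Taking the square root then gives~\eqref{eq:ermsranpshift}.

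There is no real obstacle here; the proof is essentially a bookkeeping exercise combining Parseval, Fubini, and a trivial LP on $\ell^1$. The only point that needs a moment's care is justifying the interchange of sum and expectation and checking that the supremum is indeed achieved by (or approximated by) a pure character $\rme^{2\pi \rmi \, \bsh_0 \cdot \bsx}$, which lies in $\spaceH$ since $\bsh_0$ has finite support.
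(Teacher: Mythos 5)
Your proposal is correct and follows essentially the same route as the paper: expand the error via Fourier characters, use orthogonality over the random shift to isolate $\sum_{\bsh \cdot \bsz \equiv_p 0} |\hat f(\bsh)|^2$, average over $p$, then obtain the supremum by the Hölder-type/linear-programming upper bound together with the single-character test function $\rme^{2\pi\rmi\,\bsh_0\cdot\bsx}/\ralpha(\bsh_0)$ for sharpness. The only cosmetic difference is that you frame the extremal step as a linear functional over the simplex while the paper invokes Hölder's inequality directly, and you are (slightly) more careful in saying the supremum is approached rather than asserting it is a maximum.
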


\begin{proof}
    We first note that, with $\bsDelta \sim \mathcal{U}([0,1)^d$),
    \begin{align}
        \nonumber
        \bbmE_\bsDelta\left[|Q_{d,p,\bsz,\bsDelta}(f)-I_d(f)|^2\right]
        &=
        \bbmE_\bsDelta \Bigg[\bigg|\sum_{\substack{\bsh \in \Zdnull \\ \bsh \cdot \bsz \equiv_p 0}} \hat{f}(\bsh) \, \rme^{2 \pi \rmi \, \bsh \cdot \bsDelta}\bigg|^2\Bigg] \\
        \nonumber
        &=
        \bbmE_\bsDelta \Bigg[\sum_{\substack{\bsh \in \Zdnull \\ \bsh \cdot \bsz \equiv_p 0}} \sum_{\substack{\bsh' \in \Zdnull \\ \bsh' \cdot \bsz \equiv_p 0}} \hat{f}(\bsh) \overline{\hat{f}(\bsh')} \, \rme^{2 \pi \rmi (\bsh-\bsh') \cdot \bsDelta}\Bigg] \\
        \label{eq:mse}
        &=
        \sum_{\substack{\bsh \in \Zdnull \\ \bsh \cdot \bsz \equiv_p 0}} \left|\hat{f}(\bsh)\right|^2
        ,
    \end{align}
    and by Hölder's inequality
    \begin{align*}
        &\frac{1}{|\Pn|} \sum_{p \in \Pn} \sum_{\substack{\bsh \in \Zdnull \\ \bsh \cdot \bsz \equiv_p 0}} \left|\hat{f}(\bsh)\right|^2
        =
        \sum_{\bsh \in \Zdnull} \omegafixed(\bsh) \, \left|\hat{f}(\bsh)\right|^2
        \\
        &\hspace{10mm}\leq
        \Bigg(\sup_{\bsh \in \Zdnull} \omegafixed(\bsh) \, \ralpha^{-2}(\bsh)\Bigg) \,\Bigg(\sum_{\bsh \in \Z^d} \ralpha^2(\bsh) \left|\hat{f}(\bsh)\right|^2\Bigg)
        \\
        &\hspace{10mm}=
        \Bigg(\sup_{\bsh \in \Zdnull} \omegafixed(\bsh) \, \ralpha^{-2}(\bsh)\Bigg) \, \norm{f}^2
        .
    \end{align*}
    This implies that
    \begin{align*}
    \left[\erms{d}(\Qranpshift)\right]^2
    &=
    \sup_{\substack{f \in \spaceH \\ \norm{f} \leq 1}} \frac{1}{|\Pn|} \sum_{p \in \Pn} \sum_{\substack{\bsh \in \Zdnull \\ \bsh \cdot \bsz \equiv_p 0}} \left|\hat{f}(\bsh)\right|^2
    \\
    &\leq
    \sup_{\bsh \in \Zdnull} \omegafixed(\bsh) \, \ralpha^{-2}(\bsh)
    .
    \end{align*}
    To show equality, we define a function
    \[
    \xi(\bsx)
    :=
    \frac{\rme^{2 \pi \rmi \, \bsh^* \cdot \bsx}}{\ralpha(\bsh^*)}
    ,
    \]
    where $\bsh^* \in \Zdnull$ is such that
    \[
    \omegafixed(\bsh^*) \, \ralpha^{-2}(\bsh^*)
    =
    \sup_{\bsh \in \Zdnull} \omegafixed(\bsh) \, \ralpha^{-2}(\bsh)
    =
    \max_{\substack{\bsh \in \Zdnull}}
    \omegafixed(\bsh) \, \ralpha^{-2}(\bsh)
    .
    \]
    This function is in the space $\spaceH$ and has integral $0$, norm $1$, and a single positive Fourier coefficient.
    Moreover, its root mean square error when using $\Qranpshift$,
    \begin{align*}
      &
      \sqrt{\bbmE_\omega[|Q_{d,p(\omega_1),\,\bsz,\bsDelta(\omega_2)}(\xi) - I_d(\xi)|^2]}
      \\
      &\qquad=
      \sqrt{\frac{1}{|\Pn|} \sum_{p \in \Pn} \int_{[0,1)^d} \left| \frac1p \sum_{k=0}^{p-1} r^{-1}_{\alpha,\bsgamma}(\bsh^*) \, \rme^{2 \pi \rmi \, \bsh^* \cdot \bsz k / p} \, \rme^{2 \pi \rmi \, \bsh^* \cdot \bsDelta} \right|^2 \, \mathrm{d}\bsDelta }
      \\
      &\qquad=
      \sqrt{\frac{1}{|\Pn|} \sum_{p \in \Pn} \int_{[0,1)^d} r^{-2}_{\alpha,\bsgamma}(\bsh^*) \, \bbmone(\bsh^* \cdot \bsz \equiv_p 0) \, \left| \rme^{2 \pi \rmi \, \bsh^* \cdot \bsDelta} \right|^2 \, \mathrm{d}\bsDelta }
      \\
      &\qquad=
      \sqrt{\frac{1}{|\Pn|} \sum_{p \in \Pn} r^{-2}_{\alpha,\bsgamma}(\bsh^*) \, \bbmone(\bsh^* \cdot \bsz \equiv_p 0) }
      =
      \sqrt{\omegafixed(\bsh^*) \, \ralpha^{-2}(\bsh^*)}
      ,
    \end{align*}
    is equal to the upper bound for $\erms{d}(\Qranpshift)$, implying that the upper bound can be attained and hence the worst case root mean square error is given by~\eqref{eq:ermsranpshift}.
\end{proof}

At this point the most obvious route for the proof would be to replace the root mean square error with an upper bound by noting that $\ralpha(\bsh) > B_{n,\lambda}$, see~\eqref{eq:rbound}, as is also done in \cite[Section~3.2]{KKNU2019} for an algorithm which also averages over the set of good generating vectors.
However, in our case, where we are interested in a pre-determined generating vector, the proof technique from \cite{KKNU2019} would provide the following bound
\begin{align}\label{eq:badbound}
\erms{d}(\Qranpshift)
&\leq
\frac{1}{B_{n,\lambda}} \sup_{\bsh \in \Zdnull} \omegafixed(\bsh)
\nonumber
\\
&=
\frac{1}{n^\lambda} \left( 8\muquant \right)^\lambda
,
\quad \forall \lambda \in (0,\alpha)
,
\end{align}
which is clearly not optimal in our case since we hope to achieve a convergence of approximately $O(n^{-\alpha-1/2})$. Instead we will use a slight relaxation of the supremum over $\bsh \in \Zdnull$ such that we can take the average over all possibilities of $\bsh \in \Zdnull$.
This requires us however to make sure the sum, given below in~\eqref{eq:relaxedbound}, converges.

We first need some ingredients to be able to prove our main theorem, which we will present next.
We make use of the prime number theorem \cite{lvP1896,lvP1899}, which allows us to bound the size of the set $\Pn$.
For $n \ge 2$, there exist two positive constants $C_1$ and $C_2$ such that
\begin{equation}\label{eq:PNT}
    C_1 \, \frac{n}{\ln(n)}
    \leq
    |\Pn|
    \leq
    C_2 \, \frac{n}{\ln(n)}
    .
\end{equation}
These constants can be taken closer and closer to $0.5$ when $n$ is assumed to be larger. For $n \geq 20$, we can take $C_1$ and $C_2$ that satisfy $0.4 < C_1 < 0.5 < C_2 < 0.6$.
We further use the notation $\stirling{r}{k}$ to denote Stirling numbers of the second kind, which give the number of ways of partitioning $r$ distinct elements into $k$ unlabelled non-empty partitions. We use $\boldsymbol{T}_r(\cdot)$ to denote the Touchard polynomial of order $r$ and $\boldsymbol{B}_r$ to denote the $r$th Bell number. The Touchard polynomials are given by
\[
\boldsymbol{T}_r(x)
=
\sum_{k=0}^r \stirling{r}{k} \, x^k
\]
and the Bell numbers count the partitions of a set. Crucially, we will use the fact that
\begin{equation} \label{eq:bound_on_touchard}
    \boldsymbol{T}_r(x)
    \leq
    \sum_{k=0}^r
    \stirling{r}{k}
    =
    \boldsymbol{B}_r
    \quad \text{where} \quad 0 \leq x \leq 1
    ,
\end{equation}
and the inequality
\begin{equation} \label{eq:bound_on_bell}
\boldsymbol{B}_r < \left(\frac{C_3 \, r}{\ln(r+1)}\right)^r
,
\end{equation}
where $C_3 = 0.792$, see \cite{BT2010}.

\begin{theorem} \label{theorem:existence_result}
    For $d \in \N$, $\alpha > 0$, positive weights $\{\gamma_\fraku\}_{\fraku \subset \N}$ and for each $n \in \N$ satisfying $n \geq \rme^{6 C_2}>34$, with $N = \prod_{p \in \Pn} p$, there exists a vector $\bsz \in \Z_N^d$ which achieves the bound
    \[
    \erms{d}(\Qranpshift)
    \leq
    \frac{1}{n^{\lambda+(r-1)/(2 r)}} \left( \frac{C_3 \, r \, \ln(n)}{C_1 \, \ln(r+1)} \right)^{1/2} (4 \, \muquant )^{\lambda}
    \]
    for all $0 < \lambda < \alpha$ and $r \in \N$ with $r \ge 1/(2\lambda)$. The constants $C_1$ and $C_2$ can be taken to satisfy $0.4 < C_1 < 0.5 < C_2 < 0.6$, cf.\ \eqref{eq:PNT}, and we then fix $C_3 = 0.792$.
    Moreover, if
    \begin{align}
      \sum_{\substack{\emptyset \ne \fraku \subset \N \\ |\fraku| < \infty}} \gamma_\fraku^{-1/\lambda} \, (2 \zeta(\alpha/\lambda))^{|\fraku|}
      <
      \infty
      \label{eq:tractability_condition}
    \end{align}
    then the upper bound can be stated with an absolute constant independent of the dimension.
\end{theorem}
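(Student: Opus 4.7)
The starting point is Lemma~\ref{lemma:rms_alt_form}, which gives
\[
[\erms{d}(\Qranpshift)]^2 = \sup_{\bsh \in \Zdnull} \omegafixed(\bsh)/\ralpha^{2}(\bsh).
\]
The naive bound $\omegafixed \leq 1$ together with $\ralpha(\bsh) > B_{n,\lambda}$ from Lemma~\ref{lemma:lower_bound_on_r} only recovers the suboptimal \eqref{eq:badbound}; the key manoeuvre is the $\ell_r$-relaxation $\sup_\bsh x_\bsh \leq (\sum_\bsh x_\bsh^r)^{1/r}$ for $x_\bsh \geq 0$, which converts the troublesome supremum into a sum that can be averaged over~$\bsz$. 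My plan is: first apply this relaxation; next split $\ralpha^{-2r}(\bsh) = \ralpha^{-(2r-1/\lambda)}(\bsh) \cdot \ralpha^{-1/\lambda}(\bsh)$ and bound the first factor by $B_{n,\lambda}^{-(2r-1/\lambda)}$, which is legitimate whenever $\omegafixed(\bsh) > 0$ and requires $r \geq 1/(2\lambda)$ so the exponent is nonnegative; then average over $\bsz$ uniform on $\Gnset$ and pass the average through the $r$th root by Jensen's inequality, obtaining
\[
\bbmE_\bsz\!\left[[\erms{d}(\Qranpshift)]^2\right] \leq B_{n,\lambda}^{-(2-1/(r\lambda))} \bigg(\sum_{\bsh \in \Zdnull} \frac{\bbmE_\bsz[\omegafixed^r(\bsh)]}{\ralpha^{1/\lambda}(\bsh)}\bigg)^{1/r}.
\]
Existence of a $\bsz \in \Gnset$ meeting the stated bound then follows because at least one realisation attains the average.

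The bulk of the work is bounding $\bbmE_\bsz[\omegafixed^r(\bsh)]$ sharply. Writing $\omegafixed(\bsh) = |S(\bsh,\bsz)|/|\Pn|$ with $S(\bsh,\bsz) := \{p \in \Pn : \bsh \cdot \bsz \equiv_p 0\}$ and using the power-to-falling-factorial identity $|S|^r = \sum_{k=0}^r \stirling{r}{k} |S|^{\underline{k}}$ converts the $r$th moment into a sum over ordered $k$-tuples of \emph{distinct} primes. Because of the Chinese remainder isomorphism $\Gnset \cong \bigoplus_{p \in \Pn} \Gset{p}$, the probabilities factor as $\bbmE_\bsz[|S|^{\underline k}] = \sum_{(q_1,\ldots,q_k)\text{ distinct}} \prod_{i=1}^k \pi_{q_i}(\bsh)$, with $\pi_q(\bsh) := \bbmP_{\bsz^{(q)} \sim \mathcal{U}(\Gset{q})}(\bsh \cdot \bsz^{(q)} \equiv_q 0)$. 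Combining~\eqref{eq:average_of_indicator} with $|\Gset{q}| \geq q^d/2$ gives $\pi_q(\bsh) \leq 2\bbmone(\bsh \equiv_q \bszero) + 2/q$. Expanding the product $\prod_i (2\bbmone(\bsh \equiv_{q_i} \bszero) + 2/q_i)$ over subsets $J \subseteq \{1,\ldots,k\}$ and applying the rescaling bound $\sum_{\bsh \neq \bszero,\, \bsh \equiv_{Q_J} \bszero} \ralpha^{-1/\lambda}(\bsh) \leq (\prod_{i \in J} q_i)^{-\alpha/\lambda} \muquant$ (substituting $\bsh = Q_J \bsh'$ with $Q_J = \prod_{i \in J} q_i$ and using $\alpha/\lambda > 1$), the sum over $\bsh$ factors cleanly as $2^k \muquant \prod_i (q_i^{-\alpha/\lambda} + q_i^{-1})$, which collapses to at most a constant-to-the-$k$ times $\muquant / n^k$ after invoking $q_i > n/2$.

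Summing over the ordered distinct $(q_1,\ldots,q_k)$ picks up $|\Pn|^k \leq (C_2 n/\ln n)^k$, so the $k$th term contributes $\stirling{r}{k}(\text{const}\cdot C_2/\ln n)^k \muquant$ and the total is $\muquant \cdot \boldsymbol{T}_r(\text{const}\cdot C_2/\ln n)$. Once $n$ is large enough that the Touchard argument lies in $[0,1]$, \eqref{eq:bound_on_touchard} replaces it by the Bell number $\boldsymbol{B}_r$, and \eqref{eq:bound_on_bell} gives $\boldsymbol{B}_r^{1/r} \leq C_3 r/\ln(r+1)$. Inserting $|\Pn| \geq C_1 n/\ln n$ and $B_{n,\lambda} = n^\lambda (8\muquant)^{-\lambda}$ and collecting exponents collapses the power of $n$ to $-2\lambda - (r-1)/r$; taking square roots then yields the claimed error bound. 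For the dimension-independent statement, $\muquant$ factorises as a sum over nonempty finite $\fraku \subset \N$ of an appropriate power of $\gamma_\fraku$ times $(2\zeta(\alpha/\lambda))^{|\fraku|}$, which is finite independently of $d$ under~\eqref{eq:tractability_condition}. The main obstacle is the combinatorial bookkeeping: the Stirling--Touchard expansion combined with the CRT-induced factorisation is precisely what buys us $(1/n)^k$ per $k$ distinct primes rather than a single $1/n$, so that only a modest $\sqrt{r \ln(n)/\ln(r+1)}$ polylogarithmic overhead survives in the final rate.
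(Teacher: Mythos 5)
Your proposal is correct and follows essentially the same route as the paper's proof: the $\ell_r$-relaxation of the supremum in Lemma~\ref{lemma:rms_alt_form}, the split $\ralpha^{-2r}=\ralpha^{-(2r-1/\lambda)}\ralpha^{-1/\lambda}$ controlled via Lemma~\ref{lemma:lower_bound_on_r} on the support of $\omegafixed$, averaging over $\Gnset$ with the CRT factorisation, and the Stirling--Touchard--Bell combinatorics applied to the $r$th power of the prime-indicator sum. Your falling-factorial expansion $|S|^r=\sum_k\stirling{r}{k}|S|^{\underline{k}}$ is the same identity as the paper's labelled-Stirling form $\labstirling{r}{m}=m!\stirling{r}{m}$, and your interchange of expectation and the outer $(\cdot)^{1/r}$ via Jensen is a harmless reformulation of the paper's step of averaging $[\erms{d}]^{2r}$ and extracting a $(2r)$th root at the end; the only material difference is your slightly looser per-prime bound $2\bbmone(\bsh\equiv_q\bszero)+2/q$ in place of $\bbmone(\bsh\equiv_q\bszero)+2/q$, which just inflates the constant in the Touchard argument (and the threshold on $n$) without affecting the rate.
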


\begin{proof}
    Using the result of Lemma~\ref{lemma:rms_alt_form}, we can write
    \begin{align}
        \left[\erms{d}(\Qranpshift)\right]^2
        &=
        \sup_{\bsh \in \Zdnull} \omegafixed(\bsh) \, \ralpha^{-2}(\bsh) \nonumber \\
        &\leq
        \Bigg(\sum_{\bsh \in \Zdnull} \omegafixed^{r}(\bsh) \, \ralpha^{-2 r}(\bsh)\Bigg)^{1/r}
        , \qquad \forall r > 1/(2\alpha)
        . \label{eq:relaxedbound}
    \end{align}
    We will specify that $r$ is an integer so that we can use the following equality which uses $L = |\Pn|$,
    \begin{align}\label{eq:omega-r-expanded}
    \omegafixed^r(\bsh)
    &=
    \frac{1}{L^r} \left(\sum_{p \in \Pn} \bbmone(\bsh \cdot \bsz \equiv_p 0) \right)^r
    \nonumber
    \\
    &=
    \frac{1}{L^r}\sum_{\substack{\emptyset \neq \fraku \subseteq \{1,\ldots,L\} \\ |\fraku| \leq r}} \labstirling{r}{|\fraku|} \prod_{i \in \fraku} \bbmone(\bsh \cdot \bsz \equiv_{p_i} 0)
    ,
    \end{align}
    where, for $r, m \in \N$,
    \begin{equation*}
    \labstirling{r}{m}
    :=
    \sum_{\substack{\ell_1,\ldots,\ell_m \ge 1 \\ \ell_1 + \cdots + \ell_m = r}} \frac{r!}{\prod_{i=1}^m \ell_i!}
    =
    m! \, \stirling{r}{m}
    \end{equation*}
    is the number of ways of partitioning $r$ distinct elements into $m$ labelled non-empty partitions. Each $\emptyset \neq \fraku \subseteq \{1,\ldots,L\}$ corresponds to a subset of $\Pn$ and so, with $a(r,|\fraku|)$, we count the number of times the product $\prod_{i \in \fraku} \bbmone(\bsh \cdot \bsz \equiv_{p_i} 0)$ appears when we expand the $r$th power of the sum noting that $\bbmone(\bsh \cdot \bsz \equiv_p 0)^2 = \bbmone(\bsh \cdot \bsz \equiv_p 0)$. We note also that both forms of $a(r,m)$ can be used to see~\eqref{eq:omega-r-expanded} and their equivalence is given by \cite[Proposition~5.5]{MT2016}.

    We begin by bounding the error in the case that the generating vector $\bsz$ lies in~$\Gnset$:
    \begin{align*}
        \left[\erms{d}(\Qranpshift)\right]^{2 r}
        &\le
        \sum_{\substack{\bsh \in \Zdnull \\ \ralpha(\bsh) > B_{n,\lambda}}} \omegafixed^r(\bsh) \, \ralpha^{-2 r}(\bsh)
        \\
        &\leq
        \sum_{\substack{\bsh \in \Zdnull \\ \ralpha(\bsh) > B_{n,\lambda}}} \omegafixed^r(\bsh) \, \ralpha^{-2 r}(\bsh) \left(\frac{\ralpha(\bsh)}{B_{n,\lambda}}\right)^{2 r-1/\lambda}
        \\
        &\leq
        \frac{1}{B_{n,\lambda}^{2 r-1/\lambda}} \sum_{\bsh \in \Zdnull} \omegafixed^r(\bsh) \, \ralpha^{-1/\lambda}(\bsh)
        \qquad\quad \forall r \geq 1/(2\lambda)
        .
    \end{align*}
    This uses the fact that if $\bsz \in \Gnset$ then $\omegafixed(\bsh) \neq 0$ implies $\ralpha(\bsh) > B_{n,\lambda}$, see~\eqref{eq:rbound}. It also requires the strengthened condition on $r$ that it is greater than or equal to $1/(2\lambda)$.

    Taking the average of the quantity $\big[\erms{d}(\Qranpshift)\big]^{2 r}$ produces the following bound
    \begin{align*}
        &\frac{1}{|\Gnset|} \sum_{\bsz \in \Gnset} \left[\erms{d}(\Qranpshift)\right]^{2 r}
        \\
        &\hspace{4mm}\leq
        \frac{1}{|\Gnset|} \sum_{\bsz \in \Gnset} \frac{1}{B_{n,\lambda}^{2 r-1/\lambda}} \sum_{\bsh \in \Zdnull} \omegafixed^r(\bsh) \, \ralpha^{-1/\lambda}(\bsh)
        \\
        &\hspace{4mm}=
        \frac{1}{L^r}\sum_{\substack{\emptyset \neq \fraku \subseteq \{1,\ldots,L\} \\ |\fraku| \leq r}} \frac{\labstirling{r}{|\fraku|}}{B_{n,\lambda}^{2 r-1/\lambda}} \sum_{\bsh \in \Zdnull} \prod_{i \in \fraku} \frac{1}{|\Gset{p_i}|} \sum_{\bsz^{(p_i)} \in \Gset{p_i}} \hspace{-10pt} \bbmone(\bsh \cdot \bsz \equiv_{p_i} 0) \, \ralpha^{-1/\lambda}(\bsh)
        \\
        &\hspace{4mm}\leq
        \frac{1}{L^r}\sum_{\substack{\emptyset \neq \fraku \subseteq \{1,\ldots,L\} \\ |\fraku| \leq r}} \frac{\labstirling{r}{|\fraku|}}{B_{n,\lambda}^{2 r-1/\lambda}} \sum_{\bsh \in \Zdnull} \prod_{i \in \fraku} \left( \bbmone(\bsh \equiv_{p_i} \bszero) + \frac{2}{p_i} \right) \ralpha^{-1/\lambda}(\bsh)
        \\
        &\hspace{4mm}\leq
        \frac{1}{L^r}\sum_{\substack{\emptyset \neq \fraku \subseteq \{1,\ldots,L\} \\ |\fraku| \leq r}} \frac{\labstirling{r}{|\fraku|}}{B_{n,\lambda}^{2 r-1/\lambda}} \sum_{\bsh \in \Zdnull} \sum_{\frakv \subseteq \fraku} \left(\prod_{i \in \frakv} \bbmone(\bsh \equiv_{p_i} \bszero) \right) \left(\frac{4}{n}\right)^{|\fraku\setminus\frakv|} \ralpha^{-1/\lambda}(\bsh)
        .
    \end{align*}
    Here, we have used the inequality
    \begin{align*}
    \frac{1}{|\Gset{p}|} \sum_{\bsz^{(p)} \in \Gset{p}} \bbmone(\bsh \cdot \bsz^{(p)} \equiv_p 0)
    &\leq
    \bbmone(\bsh \equiv_p \bszero)+\frac{1}{|\Gset{p}|}\bbmone(\bsh \not\equiv_p \bszero)
    \\
    &\leq
    \bbmone(\bsh \equiv_p \bszero)+\frac{2}{p}
    ,
    \end{align*}
    which is an altered version of~\eqref{eq:average_of_indicator} and makes use of Lemma~\ref{lemma:good_sets_are_large}.
    We now utilise the fact that all components of $\bsh$ are divisible by all primes $p_i$ with $i \in \frakv$ to see the following
    \begin{align*}\label{eqn:star}
    \sum_{\bsh \in \Zdnull} \left(\prod_{i \in \frakv} \bbmone(\bsh \equiv_{p_i} \bszero) \right) \ralpha^{-1/\lambda}(\bsh)
    &=
    \sum_{\bsh \in \Zdnull} \bbmone\Big(\bsh \equiv \bszero \bmod{\prod_{i \in \frakv} p_i}\Big) \, \ralpha^{-1/\lambda}(\bsh)
    \\
    &=
    \sum_{\bsh \in \Zdnull} \ralpha^{-1/\lambda}\bigg(\big(\prod_{i \in \frakv}p_i\big)\bsh\bigg)
    \\
    &\leq
    \left(\frac{2}{n}\right)^{|\frakv|} \muquant
    .
    \tag{*}
    \end{align*}
    Also, we have
    \begin{align*}\label{eqn:starstar}
        \sum_{\substack{ m = 1 \\ m \leq r}}^{L} m! \, \binom{L}{m} \stirling{r}{m} \left(\frac{6}{n}\right)^{m}
        &\leq
        \sum_{m = 1}^{r} m! \, \binom{L}{m}\stirling{r}{m} \left(\frac{6}{n}\right)^{m}
        \\
        &\leq
        \sum_{m = 1}^{r} \stirling{r}{m} \left(\frac{6 L}{n}\right)^{m}
        \\
        &\leq
        \boldsymbol{T}_r\left(\frac{6 C_2}{\ln(n)}\right)
        \\
        &\leq
        \boldsymbol{B}_r
        \\
        &\leq
        \left(\frac{C_3 \, r}{\ln(r+1)}\right)^r
        ,
        \tag{**}
    \end{align*}
    where we use \eqref{eq:PNT}, both \eqref{eq:bound_on_touchard} and \eqref{eq:bound_on_bell}, and the assumption that $n \geq \rme^{6 C_2}$ (i.e.\ $\ln(n) \geq 6 C_2$) for the penultimate inequality.
    These inequalities will be used in the following step.
    Continuing with the bound on the average of $\left[\erms{d}(\Qranpshift)\right]^{2 r}$, we have
    \begin{align*}
        &\frac{1}{|\Gnset|} \sum_{\bsz \in \Gnset} \left[\erms{d}(\Qranpshift)\right]^{2 r}
        \\
        &\hspace{5mm}\stackrel{(\refeq{eqn:star})}{\leq}
        \frac{1}{L^r}\sum_{\substack{\emptyset \neq \fraku \subseteq \{1,\ldots,L\} \\ |\fraku| \leq r}} \frac{\labstirling{r}{|\fraku|}}{B_{n,\lambda}^{2 r-1/\lambda}} \sum_{\frakv \subseteq \fraku} \left(\frac{2}{n} \right)^{|\frakv|} \left(\frac{4}{n}\right)^{|\fraku\setminus\frakv|} \muquant
        \\
        &\hspace{5mm}\leq
        \frac{1}{L^r \, B_{n,\lambda}^{2 r-1/\lambda}}\sum_{\substack{ m = 1 \\ m \leq r}}^{L} m! \, \binom{L}{m} \stirling{r}{m} \left(\frac{6}{n}\right)^{m} \muquant
        \\
        &\hspace{5mm}\stackrel{(\refeq{eqn:starstar})}{\leq}
        \frac{1}{L^r \, B_{n,\lambda}^{2 r-1/\lambda}} \left(\frac{C_3 \, r}{\ln(r+1)}\right)^r \muquant
        \\
        &\hspace{5mm}\leq
        \frac{1}{n^r \, B_{n,\lambda}^{2 r-1/\lambda}} \left(\frac{C_3 \, r \ln(n)}{C_1 \ln(r+1)}\right)^r \muquant
        \\
        &\hspace{5mm}=
        \frac{1}{8 n^{2 r\lambda+r-1}} \left(\frac{C_3 \, r \ln(n)}{C_1 \ln(r+1)}\right)^r \left(8 \muquant\right)^{2 r \lambda}
        .
    \end{align*}
    Since $\Gnset \subseteq \Z_N^d$ for all $\lambda \in (0,\alpha)$, we can see that the minimiser $\bsz \in \Z_N^d$ of $\erms{d}(\Qranpshift)$ satisfies, for all $\lambda \in (0,\alpha)$ and $r \in \N$ with $r \ge 1/(2\lambda)$,
    \begin{align*}
        \erms{d}(\Qranpshift)
        &\leq
        \left(\frac{1}{8 n^{2 r \lambda+r-1}} \left(\frac{C_3 \, r \ln(n)}{C_1 \ln(r+1)}\right)^r \left(8 \sum_{\bsh \in \Zdnull} \ralpha^{-1/\lambda}(\bsh)\right)^{2 r \lambda}\right)^{1/(2 r)} \\
        &\leq
        \frac{1}{n^{\lambda+(r-1)/(2 r)}} \left(\frac{C_3 \, r \ln(n)}{C_1 \ln(r+1)}\right)^{1/2} \left(8 \sum_{\bsh \in \Zdnull} \ralpha^{-1/\lambda}(\bsh)\right)^{\lambda}
        .
    \end{align*}

    Additionally, when~\eqref{eq:tractability_condition} holds, we have
    \[
      \muquant
      =
      \sum_{\bsh \in \Zdnull} \ralpha^{-1/\lambda}(\bsh)
      <
      \sum_{\substack{\emptyset \ne \fraku \subset \N \\ |\fraku| < \infty}} \gamma_\fraku^{-1/\lambda} \, (2 \zeta(\alpha/\lambda))^{|\fraku|}
      <
      \infty
      ,
    \]
    for all $d \in \N$, and so the bound can be stated with an absolute constant independent of the dimension.
\end{proof}

This result shows the error bound
\[
\erms{d}(\Qranpshift)
\leq
\frac{1}{n^{\lambda+1/2-1/(2r)}} \left( \frac{C_3 \, r \, \ln(n)}{C_1 \, \ln(r+1)} \right)^{1/2} (4 \, \muquant )^{\lambda}
,
\]
which holds for any $r \in \N$ with $r \ge 1/(2\lambda)$.
So, for any fixed $\epsilon > 0$, we can now pick $\lambda = \alpha - \epsilon/2$ and fix $r \in \N$ such that both $1/(2r) \le \epsilon/2$ and $r \ge 1/(2\lambda) = 1/(2\alpha-\epsilon)$ are satisfied.
Since we obviously want $\epsilon \ll \alpha$ we just have to be concerned about the second condition which asks to take $r \ge 1/\epsilon$.
Together with the fact that for any $m \ge 0$ and $\epsilon > 0$ we have $[\ln(n)]^m \in o(n^\epsilon) \subset O(n^\epsilon)$ we obtain, for any $\epsilon > 0$,
\[
\erms{d}(\Qranpshift)
\in
O(n^{-\alpha-1/2+\epsilon})
.
\]
A similar statement holds for $\eran{d}(\Qranpshift)$ since
\[\eran{d}(\Qranpshift) \le \erms{d}(\Qranpshift)\]
by Jensen's inequality.

A lower bound for $\Qranp$ and $\Qranpshift$ can be obtained similarly as in~\cite{KKNU2019}.

\begin{theorem}
    Let $d \in \N$, $\alpha \ge 0$ and positive weights $\{\bsgamma_\fraku\}_{\fraku \subset \N}$. For $n \ge 2$ and any $\bsz \in \Z^d$, the randomised errors of the algorithms $\Qranp$ and $\Qranpshift$ are bounded from below by
    \[
    \erms{d}(\Qranp) \geq \eran{d}(\Qranp) \geq \frac{\gamma_{\{1\}} \sqrt{\ln(n)} }{\sqrt{C_2} \, n^{\alpha + 1/2}}
    \]
    and
    \[
    \erms{d}(\Qranpshift)
    \geq
    \frac{\gamma_{\{1\}} \sqrt{\ln(n)}}{\sqrt{C_2} \, n^{\alpha+1/2}}
    .
    \]
    If $z_1$ is coprime to all $p \in \Pn$, then
    \[
        \eran{d}(\Qranpshift)
        \geq
        \frac{\gamma_{\{1\}} \sqrt{\ln(n)}}{\sqrt{C_2} \, n^{\alpha+1/2}}
        .
    \]

\end{theorem}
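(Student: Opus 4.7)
The plan is to exhibit, for each of the three claims, an explicit \emph{fooling function} supported (as a Fourier series) in the first coordinate only, whose Korobov norm is controlled by $\gamma_{\{1\}}$. The central construction for the first and third claims is the non-negative-coefficient trigonometric polynomial
\[
f(\bsx) = c \sum_{h \in \Pn} \rme^{2\pi\rmi\, h x_1},
\]
with $c > 0$ to be chosen so that $\norm{f} \le 1$. Only the Fourier modes $\bsh = (h,0,\ldots,0)$ with $h \in \Pn$ are present, each coefficient equals $c$, and $\ralpha(\bsh) = \gamma_{\{1\}}^{-1} h^\alpha \le \gamma_{\{1\}}^{-1} n^\alpha$. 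Hence $\norm{f}^2 \le c^2 \gamma_{\{1\}}^{-2} n^{2\alpha}\,|\Pn|$, and the choice $c := \gamma_{\{1\}}/(n^\alpha \sqrt{|\Pn|})$ forces $\norm{f} \le 1$. By \eqref{eq:PNT} this value of $c$ is at least $\gamma_{\{1\}}\sqrt{\ln(n)}/(\sqrt{C_2}\, n^{\alpha+1/2})$, which is exactly the target rate.

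For the first claim (lower bounds for $\Qranp$), I would plug this $f$ into the algorithm and observe that
\[
\Qranp(f,\omega) - I_d(f) = c \sum_{h \in \Pn} \bbmone(h z_1 \equiv_{p(\omega)} 0)
\]
is a sum of non-negative reals in which the term $h = p(\omega)$ always contributes $1$. Hence $|Q - I| \ge c$ deterministically, which immediately yields the stated bound on $\eran{d}(\Qranp)$; the inequality $\erms{d}(\Qranp) \ge \eran{d}(\Qranp)$ from the excerpt then propagates the bound to the root mean square error. For the second claim (bound on $\erms{d}(\Qranpshift)$) I would instead use the single exponential $f_{p^*}(\bsx) := \gamma_{\{1\}}(p^*)^{-\alpha}\,\rme^{2\pi\rmi\,p^* x_1}$ for any fixed $p^* \in \Pn$, which has unit norm. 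Exactly as in Lemma~\ref{lemma:rms_alt_form}, the random shift drops out of the modulus, giving $|Q_{d,p,\bsz,\bsDelta}(f_{p^*}) - I_d(f_{p^*})|^2 = \gamma_{\{1\}}^2 (p^*)^{-2\alpha}\bbmone(p^* z_1 \equiv_p 0)$; averaging over $p \in \Pn$ and keeping only the term $p = p^*$ yields $\bbmE_\omega[|Q-I|^2] \ge \gamma_{\{1\}}^2 (p^*)^{-2\alpha}/|\Pn|$, and the claim follows after taking square roots and using $p^* \le n$.

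For the third claim (lower bound on $\eran{d}(\Qranpshift)$ under the coprimality hypothesis), I would return to the summed $f$. Coprimality of $z_1$ with every $p \in \Pn$ reduces $\bbmone(h z_1 \equiv_p 0)$ to $\bbmone(h \equiv_p 0)$, and for $h, p \in \Pn$ this is $1$ only if $h = p$. Therefore only the single term $h = p(\omega_1)$ survives in $Q - I$, whose modulus equals $c$ independently of $\bsDelta(\omega_2)$, and taking the expectation delivers the bound.

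The main obstacle is actually the first claim: the naive single-exponential test function used for the second claim would give only the suboptimal rate $n^{-\alpha-1}\ln(n)$ for $\Qranp$, because the indicator $\bbmone(p^* z_1 \equiv_p 0)$ would fire for just one prime out of $|\Pn|$, and the expectation over the random $p$ would cost a factor $1/|\Pn|$ rather than $1/\sqrt{|\Pn|}$. The summed construction pools the ``bad-for-one-prime'' events across all primes while only paying $\sqrt{|\Pn|}$ in the norm, and the decisive point is that the resulting sum is a sum of non-negative reals, so no cancellation is possible in $|Q - I|$. For the third claim the analogous danger is cancellation across Fourier modes induced by the random shift; the coprimality hypothesis is precisely what collapses the sum to a single surviving mode, leaving the shift nothing to cancel against.
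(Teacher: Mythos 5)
Your proposal is correct and follows essentially the same route as the paper: a fooling function whose Fourier modes $(h,0,\ldots,0)$, $h\in\Pn$, live only in the first coordinate, normalised via $|\Pn|\le C_2 n/\ln n$, with the key observation that the mode $h=p$ is always aliased, non-negativity of coefficients prevents cancellation for $\Qranp$, and coprimality collapses the sum to the single surviving mode for $\eran{d}(\Qranpshift)$. The only cosmetic differences are that the paper weights the coefficients by $\ralpha^{-1}(h_1)$ to make $\norm{f_n}$ exactly~$1$ (rather than bounding $h^\alpha\le n^\alpha$ as you do), and it reuses the same summed $f_n$ for the $\erms{d}(\Qranpshift)$ bound via \eqref{eq:mse} instead of your single exponential $f_{p^*}$; both choices yield the identical rate.
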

\begin{proof}
    For any $n \ge 2$, define the function
    \begin{align*}
        f_n(\bsx)
        &:=
        \sum_{h_1 \in \Pn} \left( \ralpha(h_1, 0, \ldots, 0) \, \sqrt{|\Pn|} \right)^{-1} \, \rme^{2 \pi \rmi \, h_1 x_1}
        .
    \end{align*}
    This function only has positive Fourier coefficients, $I_d(f) = 0$ and $\norm{f} = 1$ for all $\alpha \ge 0$ and thus its error can be used as a lower bound.
    For any $p \in \Pn$,
    \begin{align*}
        | Q_{d,p,\bsz}(f_n) - I_d(f_n) |
        &=
        \sum_{\substack{h_1 \in \Pn \\ h_1 z_1 \equiv_p 0}} \left( \ralpha(h_1, 0, \ldots, 0) \, \sqrt{|\Pn|} \right)^{-1}
        \\
        &\ge
        \left( \ralpha(p, 0, \ldots, 0) \, \sqrt{|\Pn|} \right)^{-1}
        ,
    \end{align*}
    for which the last line is equality if $z_1$ and $p$ are relatively prime since then only the term for $h_1 = p$ survives.
    Also
    \begin{align*}
        | Q_{d,p,\bsz,\bsDelta}(f_n) - I_d(f_n) |
        &=
        \Bigl| \sum_{\substack{h_1 \in \Pn \\ h_1 z_1 \equiv_p 0}} \left( \ralpha(h_1, 0, \ldots, 0) \, \sqrt{|\Pn|} \right)^{-1} \, \rme^{2 \pi \rmi \, h_1 \Delta_1} \Bigr|
        \\
        &\stackrel{\mathclap{(*)}}{=}
        \left( \ralpha(p, 0, \ldots, 0) \, \sqrt{|\Pn|} \right)^{-1}
        ,
    \end{align*}
    where the last equality, marked with the star, only holds if $z_1$ and $p$ are relatively prime.
    Lastly, for arbitrary~$\bsz$, from~\eqref{eq:mse},
    \begin{align*}
        \bbmE_\bsDelta\bigl[ | Q_{d,p,\bsz,\bsDelta}(f_n) - I_d(f_n) |^2 \bigr]
        &=
        \sum_{\substack{h_1 \in \Pn \\ h_1 z_1 \equiv_p 0}} \left( \ralpha(h_1, 0, \ldots, 0) \, \sqrt{|\Pn|} \right)^{-2}
        \\
        &\ge
        \left( \ralpha(p, 0, \ldots, 0) \, \sqrt{|\Pn|} \right)^{-2}
        ,
    \end{align*}
    where again the last line is equality if $z_1$ and $p$ are relatively prime.
    The above expressions become independent of $p$ by using $p \le n$ for $p \in \Pn$ and the upper bound on $|\Pn|$ given by~\eqref{eq:PNT}. These are then independent of the random elements in the algorithms and hence we arrive at the stated lower bounds.
\end{proof}

\bigskip

\textbf{Acknowledgements.}\quad \mbox{}
We would like to acknowledge the support from the Research Foundation Flanders (FWO G091920N).
We thank the referees and Frances Y.\ Kuo for discussions on this paper.

\bibliographystyle{abbrv}
\bibliography{Bibliography.bib}

\end{document}